\numberwithin{equation}{section}
\theoremstyle{definition}
\newtheorem*{definition2*}{Definition}
\newcommand{\dd}{\textup{d}}
\def\eps{\varepsilon}
\def\E{\mathbb{E}}
\def\P{\mathbb{P}}
\def\R{\mathbb{R}}
\def\<{\langle}
\def\>{\rangle}
\newcommand{\drie}{\textcolor{black}{\mathcal{L}}_{\textup{rie}}}
\def\DD{\prescript{}{0}D_{t}^{1-\alpha}}
\def\D{\frac{\partial^{1-\alpha}}{\partial t^{1-\alpha}}}
\def\L{\textcolor{black}{\mathcal{L}}}
\newcommand{\TheTitle}{Cover times of many diffusive or subdiffusive searchers}
\newcommand{\ShortTitle}{Cover times of many diffusive searchers}
\newcommand{\TheAuthors}{Hyunjoong Kim and Sean D. Lawley}
\headers{\ShortTitle}{\TheAuthors}
\title{{\TheTitle}\thanks{
\funding{HK was supported by the Simons Foundation (Math + X grant) and SDL was supported by the National Science Foundation (CAREER DMS-1944574 and DMS-1814832).}}}
\author{Hyunjoong Kim\thanks{Center for Mathematical Biology \& Department of Mathematics,
University of Pennsylvania, Philadelphia, PA 19104, USA. HK was supported by the Simons Foundation (Math + X grant).} \and Sean D. Lawley\thanks{Department of Mathematics, University of Utah, Salt Lake City, UT 84112 USA (\texttt{lawley@math.utah.edu}). SDL was supported by the National Science Foundation (Grant Nos.\ CAREER DMS-1944574 and DMS-1814832).}
}
\date{\today}
\begin{document}

\maketitle

\begin{abstract}
Cover times measure the speed of exhaustive searches which require the exploration of an entire spatial region(s). Applications include the immune system hunting pathogens, animals collecting food, robotic demining or cleaning, and computer search algorithms. Mathematically, a cover time is the first time a random searcher(s) comes within a specified ``detection radius'' of every point in the target region (often the entire spatial domain).
Due to their many applications and their fundamental probabilistic importance, cover times have been extensively studied in the physics and probability literatures. This prior work has generally studied cover times of a single searcher with a vanishing detection radius or a large target region. This prior work has further claimed that cover times for multiple searchers can be estimated by a simple rescaling of the cover time of a single searcher. In this paper, we study cover times of many diffusive or subdiffusive searchers and show that prior estimates break down as the number of searchers grows. We prove a rather universal formula for all the moments of such cover times in the many searcher limit that depends only on (i) the searcher's characteristic (sub)diffusivity and (ii) a certain geodesic distance between the searcher starting location(s) and the farthest point in the target. This formula is otherwise independent of the detection radius, space dimension, target size, and domain size. We illustrate our results in several examples and compare them to detailed stochastic simulations. 
\end{abstract}



\section{Introduction}

How long does it take a random search process to find an entire collection of targets? This so-called \textit{cover time} measures the speed of exhaustive searches \cite{aldous1989}. Such exhaustive or comprehensive searches demand that the search process fully explore a region(s) (possibly the entire spatial domain), which contrasts searches that terminate upon the first hit to a target (whose speed is typically measured by a so-called first hitting time or first passage time \cite{redner2001, chou_first_2014}). Exhaustive searches are involved in a variety of applications \cite{chupeau2015}, including the immune system hunting pathogens (bacteria, viruses, etc.), animals collecting food and other resources, robots combing for dangers (chemical leaks, explosives, mines, etc.) or simply cleaning an area, and computer search algorithms.

There is a long history in both the physics literature \cite{nemirovsky1990, yokoi1990, brummelhuis1991, hemmer1998, nascimento2001, zlatanov2009, mendoncca2011, chupeau2014, chupeau2015, majumdar2016, grassberger2017, cheng2018, maziya2020, dong2023, han2023} and the probability literature \cite{aldous1983, aldous1989, broder1989, aldous1989b, kahn1989, dembo2003, dembo2004, ding2012, belius2013, belius2017} analyzing cover times. To describe more precisely, let $X=\{X(t)\}_{t\ge0}$ denote the path of a searcher randomly exploring a $d$-dimensional spatial domain $M\subseteq \R^{d}$. If the searcher has some detection radius $r>0$, then the region explored by the searcher by time $t\ge0$ is
\begin{align}\label{wiener}
\mathcal{S}(t)
:=\cup_{s=0}^{t}B(X(s),r)\subseteq M,
\end{align}
where $B(x,r)$ denotes the ball of radius $r$ centered at $x\in M$ (in the case that $X$ is a Brownian motion, \eqref{wiener} is called the Wiener sausage \cite{donsker1975}). The cover time of a given target region $U_{\text{T}}\subseteq M$ is then
\begin{align}\label{singlesigma}
\sigma
:=\inf\{t>0:U_{\text{T}}\subseteq \mathcal{S}(t)\}.
\end{align}

Prior work has generally studied the statistics and distribution of the cover time $\sigma$ in the case that the detection radius $r$ is much smaller than the size of the target region $U_{\text{T}}$, which is often taken to be the entire domain (i.e.\ $U_{\text{T}}=M$). For example, suppose $X$ is a Brownian motion with diffusivity $D>0$ on a smooth, compact, connected, $d$-dimensional Riemannian manifold without boundary. In the limit of a vanishing detection radius, it was proven that the cover time $\sigma$ diverges according to
\begin{align}\label{blowup1}
\sigma
\sim t_{d}\quad\text{almost surely as $r\to0$},
\end{align}
where the divergence of the deterministic time $t_{d}$ depends critically on the space dimension,
\begin{align}\label{blowup2}
t_{d}
=\begin{dcases}
\frac{1}{\pi}\frac{|M|}{D}(\ln(1/r))^{2}& \text{if }d=2,\\
\frac{d\,\Gamma(d/2)}{2(d-2)\pi^{d/2}}\frac{|M|r^{2-d}}{D}\ln(1/r) & \text{if }d\ge3,
\end{dcases}
\end{align}
where $|M|$ denotes the $d$-dimensional volume of the spatial domain. 
The results in \eqref{blowup1}-\eqref{blowup2} can be derived heuristically (see \cite{dembo2003}) by combining (i) asymptotics of the principal eigenvalue of the Laplacian in a domain with a small hole \cite{ozawa1983, ward1993} with (ii) classical results in extreme value theory \cite{haanbook}. Indeed, it was shown in the physics literature \cite{chupeau2015} that cover times for non-compact random walks on discrete state spaces follow an extreme value distribution (specifically, Gumbel) with a diverging mean in the limit of a large target region.

Equations~\eqref{wiener}-\eqref{singlesigma} and \eqref{blowup1}-\eqref{blowup2} concern a single searcher, but the applications mentioned above generally involve cover times of multiple searchers. To explain, consider $N\ge1$ independent and identically distributed (iid) random searchers, $\{X_{n}\}_{n=1}^{N}$, and let $\mathcal{S}_{N}(t)$ denote the region explored by this collection of searchers by time $t\ge0$,
\begin{align}\label{setN}
\mathcal{S}_{N}(t)
:=\cup_{n=1}^{N}\cup_{s=0}^{t}B(X_{n}(s),r)\subseteq M.
\end{align}
The cover time of a target $U_{\text{T}}$ by these $N$ searchers is then
\begin{align*}
\sigma_{N}
:=\inf\{t>0:U_{\text{T}}\subseteq \mathcal{S}_{N}(t)\}.
\end{align*}
See Figure~\ref{figschem} for an illustration. 
Prior works have argued that the cover time for $N$ searchers can be obtained by simply rescaling the cover time of a single searcher \cite{chupeau2015, dong2023}. Specifically, these works have claimed that as long as $N$ is not too large,
\begin{align}\label{approxe}
\sigma_{N}
\approx_{\text{dist}}
\sigma/N,
\end{align}
where $\approx_{\text{dist}}$ denotes approximate equality in distribution.

\begin{figure}
  \centering
             \includegraphics[width=0.6\textwidth]{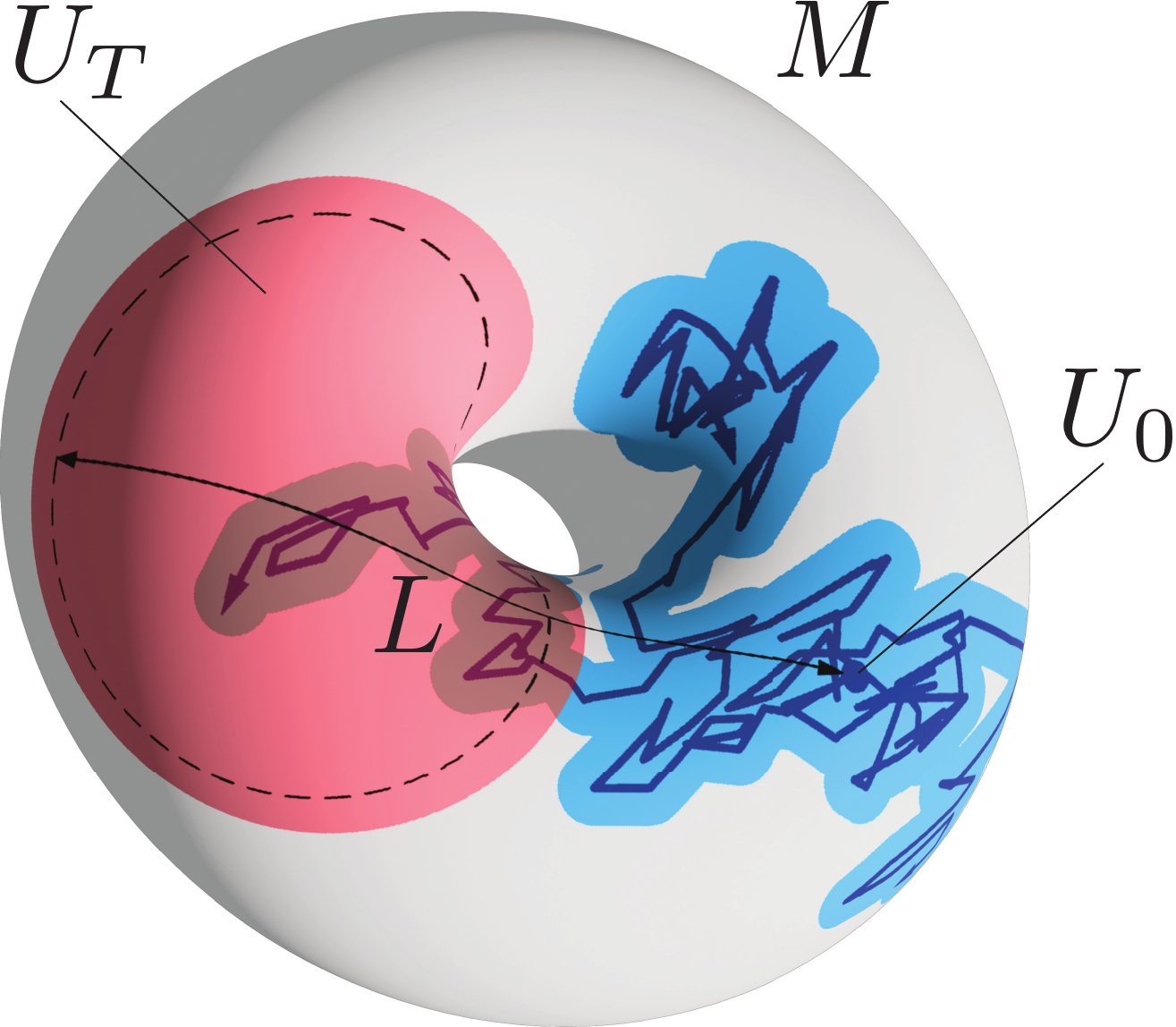}                   
    \caption{Example of a spatial domain $M$ given by the $(d=2)$-dimensional torus with $N=3$ diffusive searchers that all start at the single point labeled $U_{0}$. The dark blue indicates the paths of the searchers and the light blue region indicates the corresponding region detected (the set $S_{N}(t)$ in \eqref{setN}). The red region labeled $U_{\text{T}}$ denotes the target and the black curve labeled $L$ depicts the shortest path a searcher must travel to cover the farthest part of the target.}
 \label{figschem}
\end{figure}

In this paper, we analyze the cover time of $N\gg1$ diffusive or subdiffusive searchers. We find that the approximation \eqref{approxe} breaks down as $N$ grows, as we prove the following rather universal formula for the $m$th moment of the cover time for $N$ diffusive searchers,
\begin{align}\label{main0}
\E[(\sigma_{N})^{m}]
\sim\Big(\frac{L^{2}}{4D\ln N}\Big)^{m}\quad\text{as }N\to\infty,
\end{align}
for any moment $m\ge1$. In \eqref{main0}, $D>0$ is a searcher's characteristic diffusivity, and $L>0$ is the distance a searcher must travel to reach the farthest part of the target. More precisely, 
\begin{align}\label{L0}
L
=\sup_{y\in U_{\text{T}}}\L(U_{0},B(y,r))>0,
\end{align}
where $U_{0}\subset M$ is the support of the initial positions of the searchers and $\L:M\times M\to[0,\infty)$ is a certain distance metric depending on the geometry of the space $M$ (and also on the stochastic dynamics of the searchers in the case of a space-dependent diffusivity, see section~\ref{exdrift}). The formula in \eqref{main0} means that the $1/N$ decay in \eqref{approxe} slows down markedly to $1/\ln N$ for large $N$. Hence, the cover time ``speed-up'' gained from additional searchers saturates for many searchers. Put another way, halving the cover time requires merely doubling the number of searchers in the regime in \eqref{approxe}, whereas halving the cover time requires squaring the number of searchers in the large $N$ regime of \eqref{main0}.

To illustrate \eqref{main0} in a simple example, suppose the searchers move by pure Brownian motion with diffusivity $D$, the spatial domain $M$ is the $d$-dimensional torus with diameter $l=\sup_{x,y\in M}\|x-y\|$ with $l>r$, the target is the entire torus $U_{\text{T}}=M$, and the searchers all start at a single point $U_{0}=x_{0}\in M$. In this case, \eqref{main0} holds with $L=l-r>0$. Our proof of \eqref{main0} relies on large deviation theory and short-time asymptotics of the heat kernel \cite{varadhan1967, norris1997}, extreme first passage time theory \cite{lawley2020uni}, and combinatorial arguments using the inclusion-exclusion principle \cite{durrett2019}. We note that \eqref{main0} was previously shown for the time it takes one-dimensional, pure Brownian motions to cover an interval with either periodic or reflecting boundary conditions \cite{majumdar2016}.

There are several counterintuitive features of \eqref{main0}. First, \eqref{main0} is independent of the space dimension $d\ge1$. This contrasts the cover time for a single searcher (i.e.\ $N=1$), which, as one expects, is slower for higher space dimensions $d$. Second, \eqref{main0} depends only weakly on the detection radius $r$. In particular, if we fix any detection radius $r>0$, then \eqref{main0} implies that the $m$th moment of the cover time has the following upper bound for sufficiently large $N$,
\begin{align}\label{cub0}
\E[(\sigma_{N})^{m}]
\le
\Big(\frac{(L_{+})^{2}}{4D\ln N}\Big)^{m},
\end{align}
where $L_{+}=\sup_{y\in U_{\text{T}}}\L(U_{0},y)>L$. The upper bound \eqref{cub0} is perhaps counterintuitive since the cover time diverges as the detection radius vanishes. Naturally, a smaller detection radius $r$ requires a larger searcher number $N$ in order for \eqref{cub0} to hold. In the Discussion section, we address the question of when a system is in the large $N$ regime versus the small detection radius regime.

Third, \eqref{main0} depends on the geometry of the target set $U_{\text{T}}$ only through \eqref{L0}. In particular, \eqref{main0} is unchanged if the target set is reduced to only a single point achieving the supremum in \eqref{L0}. 
That is, the cover time for a large target is no longer than the cover time of this single point in the many searcher limit. In addition, the cover time is unaffected by changing the domain $M$ (assuming $L$ in \eqref{L0} is unchanged). This contrasts with the cover time of a single searcher which is proportional to the volume of the domain $M$ for a small detection radius (see \eqref{blowup1}-\eqref{blowup2}). Finally, we find in section~\ref{exdrift} that \eqref{main0} is independent of any deterministic drift biasing the motion of the searchers.

We also determine the moments of the cover time for many subdiffusive searchers. Subdiffusion is marked by a mean-squared displacement that grows according to a sublinear power law. Specifically, if $\{Y(t)\}_{t\ge0}$ denotes the path of a subdiffusive searcher, then 
\begin{align*}
\E\|Y(t)-Y(0)\|^{2}\propto t^{\alpha},\quad \alpha\in(0,1),
\end{align*}
and $\alpha\in(0,1)$ is termed the subdiffusive exponent. Subdiffusion has been observed in many physical systems \cite{oliveira2019, klafter2005, barkai2012, sokolov2012} and is especially prevalent in cell biology \cite{golding2006, hofling2013}. Assuming that the subdiffusion is modeled by a time fractional Fokker-Planck equation \cite{metzler1999} (which is equivalent to an appropriate random time change of a normal diffusive process, see section~\ref{exsub}), we prove the following formula for the $m$th moment of the cover time for $N$ subdiffusive searchers,
\begin{align}\label{main0sub}
\E[(\sigma_{N})^{m}]
\sim
\Big(\frac{\alpha(2-\alpha)^{\frac{2-\alpha}{\alpha}}\big(L^{2}/(4{{D}})\big)^{1/\alpha}}{(\ln N)^{2/\alpha-1}}\Big)^{m}\quad\text{as }N\to\infty,
\end{align}
where $\alpha\in(0,1)$ is the subdiffusion exponent, ${{D}}>0$ is a characteristic subdiffusion coefficient (with dimension $(\text{length})^{2}(\text{time})^{-\alpha}$), and the length $L$ is in \eqref{L0}. Notably, comparing \eqref{main0sub} with \eqref{main0} implies that many subdiffusive searchers cover a target faster than many diffusive searchers (i.e.\ the cover time in \eqref{main0sub} is less than the cover time in \eqref{main0} for sufficiently large $N$).

The rest of the paper is organized as follows. In section~\ref{math}, we prove \eqref{main0} and \eqref{main0sub} in a general mathematical setting. In section~\ref{examples}, we illustrate these general results in several specific examples and compare to stochastic simulations. 
We conclude by discussing related work, including addressing the question of when a system is in the small detection radius regime of \eqref{blowup1}-\eqref{blowup2} versus the many searcher regime of \eqref{main0}. 

\section{General mathematical analysis}\label{math}

In this section, we prove the cover time moment formulas in \eqref{main0} and \eqref{main0sub} for diffusive and subdiffusive search under generic assumptions on the first passage times of single searchers.

\subsection{Setup and main theorem for diffusive search}\label{setup}

Consider $N\ge1$ iid searchers, $\{X_{n}(t)\}_{t\ge0}$ for $n=1,\dots,N$, on a set $M\subset\R^{d}$. For a given metric ${\L_{B}}$ on $M$, let $B(x,r)$ denote the closed ball of radius $r>0$ centered at $x\in M$,
\begin{align}\label{ball}
B(x,r)
:=\{y\in M:{\L_{B}}(x,y)\le r\}.
\end{align}
Consider the $d$-dimensional region detected by the $N$ searchers by time $t\ge0$,
\begin{align*}
\mathcal{S}_{N}(t)
:=\cup_{n=1}^{N}\cup_{s=0}^{t}B(X_{n}(s),r)\subseteq M.
\end{align*}
For some compact target set $U_{\text{T}}\subseteq M$, define the cover time
\begin{align*}
\sigma_{N}
:=\inf\{t>0:U_{\text{T}}\subseteq \mathcal{S}_{N}(t)\}.
\end{align*}
Assume the initial distribution of the searchers has compact support $U_{0}\subset M$ which satisfies
\begin{align}\label{nontrivial0}
\sup_{y\in U_{\text{T}}}{\L_{B}}(U_{0},B(y,r))>0,
\end{align}
where we define the distance between any two sets $U,V\subseteq M$ by
\begin{align}\label{setdistance}
{\L_{B}}(U,V)
:=\inf_{x\in U,y\in V}{\L_{B}}(x,y).
\end{align}
The assumption~\eqref{nontrivial0} precludes the trivial case that the searchers immediately cover the target due to their initial placement (i.e.\ $\sigma_{N}=0$).

For any set $U\subset M$, let $\tau(U)$ denote the FPT to $U$ for a single searcher,
\begin{align}\label{tauU}
\tau(U)
:=\inf\{t>0:X_{1}(t)\in U\}.
\end{align}
Assume there exists a diffusivity $D>0$ and a metric $\L$ on $M$ that is bilipschitz equivalent\footnote{A pair of metrics $\L$ and ${\L_{B}}$ on $M$ are bilipschitz equivalent if there exists constants $\beta_{1}>0$, $\beta_{2}>0$ such that $\beta_{1}\L(x,y)\le {\L_{B}}(x,y)\le \beta_{2}\L(x,y)$ for all $x,y\in M$.} to ${\L_{B}}$ such that for any set $U\subset M$ equal to a finite union of balls of the form
\begin{align*}
U=\cup_{i=1}^{k}B(x_{i},r'),
\end{align*}
for $r'>0$ and $x_{1},\dots,x_{k}\in U_{\text{T}}$ with $1\le k<\infty$, we have that
\begin{align}\label{finite}
\int_{0}^{\infty} (\P(\tau(U)> t))^{N}\,\dd t<\infty\quad\text{for some }N\ge1,
\end{align}
and
\begin{align}\label{log}
\lim_{t\to0+}t\ln \P(\tau(U)\le t)
=-\frac{\L^{2}(U_{0},U)}{4D}\le0,
\end{align}
where $\L(U_{0},U)$ is defined analogously to \eqref{setdistance}.

Before stating our main theorem, we comment on assumptions \eqref{finite} and \eqref{log}. The integral in \eqref{finite} is the mean fastest first passage time out of $N$ searchers, i.e.\ $\E[\min\{\tau_{1}(U),\dots,\tau_{N}(U)\}]$ where $\tau_{1}(U),\dots,\tau_{N}(U)$ are $N$ iid realizations of $\tau(U)$ in \eqref{tauU}. Hence, assumption \eqref{finite} requires that this mean fastest first passage time is finite for sufficiently large $N$. The simplest way for \eqref{finite} to hold is simply that $\E[\tau(U)]<\infty$ (i.e.\ \eqref{finite} holds for all $N\ge1$), which is typically the case for diffusion in a bounded spatial domain. More generally, \eqref{finite} is assured as long as $\P(\tau(U)>t)$ decays no slower than a power law as $t\to\infty$. Assumption~\eqref{log} concerns the short-time behavior of the distribution of first passage times. As we review in section~\ref{examples}, the particular short-time behavior in \eqref{log} is characteristic of diffusion in very general mathematical scenarios and is often referred to as Varadhan's formula in the large deviation literature \cite{varadhan1967, norris1997}.

\begin{theorem}\label{main}
Under the assumptions of \eqref{nontrivial0}, \eqref{finite}, and \eqref{log}, for any moment $m\ge1$ we have that
\begin{align}\label{mainformula}
\E[(\sigma_{N})^{m}]
\sim
\Big(\frac{L^{2}}{4D\ln N}\Big)^{m}\quad\text{as }N\to\infty,
\end{align}
where
\begin{align*}
L=\sup_{y\in U_{\text{T}}}\L(U_{0},B(y,r))>0.
\end{align*}
\end{theorem}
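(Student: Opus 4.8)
The plan is to sandwich the cover time $\sigma_{N}$ between fastest first passage times to suitable balls and to feed the resulting extreme first passage problems into the universal asymptotics that assumptions \eqref{finite} and \eqref{log} are designed to trigger. For a set $U$, write $T_{N}(U):=\min_{1\le n\le N}\tau_{n}(U)$ for the fastest FPT among the $N$ searchers, so that $\P(T_{N}(U)>t)=(\P(\tau(U)>t))^{N}$ by independence. The key input I would isolate as a lemma: for any admissible $U$ with $\ell:=\L(U_{0},U)>0$, assumptions \eqref{finite} and \eqref{log} put $\tau(U)$ exactly in the setting of the universal extreme FPT formula \cite{lawley2020uni}, yielding for every $m\ge1$
\[
\E[(T_{N}(U))^{m}]\sim\Big(\frac{\ell^{2}}{4D\ln N}\Big)^{m}\quad\text{as }N\to\infty,
\]
together with the stronger statement that $\frac{4D\ln N}{\ell^{2}}T_{N}(U)\to1$ both in probability and in $L^{m}$ for each $m$.

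For the lower bound I would note that covering any single point $y\in U_{\text{T}}$ forces some searcher to enter $B(y,r)$, so $\sigma_{N}\ge T_{N}(B(y,r))$ pathwise. Applying the lemma to $U=B(y,r)$ gives $\E[(\sigma_{N})^{m}]\ge\E[(T_{N}(B(y,r)))^{m}]\sim(\L(U_{0},B(y,r))^{2}/(4D\ln N))^{m}$, and letting $\L(U_{0},B(y,r))\uparrow L$ over $y\in U_{\text{T}}$ yields $\liminf_{N}\E[(\sigma_{N})^{m}]/(L^{2}/(4D\ln N))^{m}\ge1$.

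For the upper bound I would fix $\eps\in(0,r)$ and use compactness of $U_{\text{T}}$ to pick finitely many centers $y_{1},\dots,y_{k}\in U_{\text{T}}$, with $k=k(\eps)<\infty$, whose balls $B(y_{i},\eps)$ cover $U_{\text{T}}$. A triangle-inequality argument shows that as soon as a searcher enters the larger ball $B(y_{i},r-\eps)$, the smaller ball $B(y_{i},\eps)$ is detected; hence $U_{\text{T}}$ is covered once every $B(y_{i},r-\eps)$ has been visited, so $\sigma_{N}\le\max_{1\le i\le k}T_{N}(B(y_{i},r-\eps))$. A short geometric estimate gives $\L(U_{0},B(y_{i},r-\eps))\le L+O(\eps)$ for every $i$.

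Controlling this maximum is the step I expect to be the main obstacle. A crude union or inclusion-exclusion bound on $\P(\max_{i}T_{N}(B(y_{i},r-\eps))>t)$ loses a factor $k$ and cannot recover the sharp constant. Instead I would use the concentration from the lemma: with $\ell_{*}:=\max_{i}\L(U_{0},B(y_{i},r-\eps))\le L+O(\eps)$ and $a_{N}:=\ell_{*}^{2}/(4D\ln N)$, each $T_{N}(B(y_{i},r-\eps))/a_{N}$ converges in probability to $(\L(U_{0},B(y_{i},r-\eps))/\ell_{*})^{2}\le1$, so $\max_{i}T_{N}(B(y_{i},r-\eps))/a_{N}\to1$ in probability. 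To upgrade this to moment convergence I would invoke uniform integrability: since $\big(\max_{i}T_{N}(B(y_{i},r-\eps))\big)^{m}\le\sum_{i}\big(T_{N}(B(y_{i},r-\eps))\big)^{m}$ and each summand has a convergent $(m+1)$-st moment by the lemma, the family is UI, whence $\E[(\max_{i}T_{N}(B(y_{i},r-\eps)))^{m}]\sim a_{N}^{m}\le((L+O(\eps))^{2}/(4D\ln N))^{m}$. This gives $\limsup_{N}\E[(\sigma_{N})^{m}]/(L^{2}/(4D\ln N))^{m}\le1+O(\eps)$, and sending $\eps\downarrow0$ matches the lower bound. The two delicate points are checking that \eqref{finite}--\eqref{log} genuinely deliver the $L^{m}$ (not merely in-distribution) extreme FPT asymptotics, and the uniform-integrability estimate that prevents the cover cardinality $k(\eps)$ from corrupting the leading constant.
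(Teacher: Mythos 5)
Your proposal is correct, and its skeleton (lower bound from a single ball $B(y,r)$; upper bound from a finite $\eps$-cover of $U_{\text{T}}$ plus the triangle-inequality shrinking of the detection radius) matches the paper's. Where you genuinely diverge is the key step of extracting the sharp constant from $\max_{i} T_{N}(B(y_{i},r-\eps))$. The paper works entirely at the level of moments: it expands $\P(\cup_{i}\{T_{N}>t\})$ by inclusion--exclusion, observes that each term $\min_{i\in I}T_{N}$ is itself the fastest FPT to the finite union $\cup_{i\in I}B(x_{i},(1-\eps)r)$ --- which is precisely why assumptions \eqref{finite}--\eqref{log} are stated for finite unions of balls --- applies the universal extreme-FPT moment formula of \cite{lawley2020uni} to each union, and then collapses the alternating sum of limiting constants via the combinatorial identity of Lemma~\ref{comb} (alternating sums of minima over subsets equal the maximum). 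You instead use concentration plus uniform integrability: convergence in probability of each $T_{N}/a_{N}$ (which is indeed legitimate --- it follows from the $m=1,2$ moment asymptotics by Chebyshev, so your lemma is backed by the same citation), hence of the finite maximum, and then the bound $(\max_{i}T_{N})^{m}\le\sum_{i}T_{N}^{m}$ applied at order $m+1$ to get uniform integrability --- correctly exploiting that the cover cardinality $k(\eps)$ only needs to enter a uniform bound, never the limiting value. Both arguments share the same small unproved geometric point, namely that $\L(U_{0},B(y,(1-\eps)r))\le\sup_{y'}\L(U_{0},B(y',r))+O(\eps)$ uniformly in $y$, which requires a geodesic/length-space-type property of the metric; the paper asserts this with equal brevity, so you are not behind on rigor there. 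As for what each approach buys: the paper's route never needs convergence in probability and stays within moment asymptotics, but requires the hypotheses to hold for unions of balls and needs Lemma~\ref{comb}; your route invokes the hypotheses only for single balls and uses standard machinery (Chebyshev plus UI), at the modest cost of consuming one extra moment to drive the UI step, which is harmless since the extreme-FPT asymptotics hold for every moment.
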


An immediate corollary of Theorem~\ref{main} is that the cover time becomes deterministic for many searchers in the sense that its coefficient of variation vanishes,
\begin{align}\label{cv}
\frac{\sqrt{\textup{Variance}(\sigma_{N})}}{\E[\sigma_{N}]}
\to0\quad\text{as }N\to\infty.
\end{align}

\subsection{Extension to subdiffusive search}

The moment formula in Theorem~\ref{main} can be quickly extended to subdiffusive search. As we discuss in section~\ref{exsub}, the short-time behavior of the FPT of a subdiffusive searcher modeled by a fractional Fokker-Planck equation is generally given by a minor modification of \eqref{log}. In particular, \eqref{log} is replaced by
\begin{align}\label{logsub}
\lim_{t\to0+}t^{\frac{\alpha }{2-\alpha}}\ln \P(\tau(U)\le t)
=-(2-\alpha) \alpha ^{\frac{\alpha }{2-\alpha}} \Big(\frac{\L^{2}(U_{0},U)}{4{{D}}}\Big)^{\frac{1}{2-\alpha }}\le0,
\end{align}
where $\alpha\in(0,1)$ is the subdiffusion exponent and ${{D}}>0$ a characteristic subdiffusion coefficient (with dimension $(\text{length})^{2}(\text{time})^{-\alpha}$).

\begin{theorem}\label{mainsub}
Under the assumptions of \eqref{nontrivial0}, \eqref{finite}, and \eqref{logsub}, for any moment $m\ge1$ we have that
\begin{align}\label{mainformulasub}
\E[(\sigma_{N})^{m}]
\sim
\bigg(\frac{\alpha(2-\alpha)^{\frac{2-\alpha}{\alpha}}\big(L^{2}/(4{{D}})\big)^{1/\alpha}}{(\ln N)^{2/\alpha-1}}\bigg)^{m}\quad\text{as }N\to\infty,
\end{align}
where
\begin{align*}
L=\sup_{y\in U_{\text{T}}}\L(U_{0},B(y,r))>0.
\end{align*}
\end{theorem}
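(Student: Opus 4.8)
The plan is to run the argument for Theorem~\ref{main} with assumption~\eqref{log} replaced by assumption~\eqref{logsub}: the proof feeds on the single-searcher first passage time only through its short-time logarithmic rate, so changing that rate alters nothing structural and only changes the final arithmetic. The backbone is to sandwich the cover time. For the lower bound, let $y^{\ast}\in U_{\text{T}}$ attain the supremum defining $L$ (it exists since $U_{\text{T}}$ is compact and $y\mapsto\L(U_{0},B(y,r))$ is continuous), so $\L(U_{0},B(y^{\ast},r))=L$. Covering $U_{\text{T}}$ requires covering $y^{\ast}$, hence
\begin{align*}
\sigma_{N}\ge T_{N}^{\ast}:=\min_{1\le n\le N}\tau_{n}(B(y^{\ast},r)).
\end{align*}
For the upper bound I would cover the compact set $U_{\text{T}}$ by finitely many balls $B(z_{j},\rho)$, $j=1,\dots,k$, and note that reaching each center $z_{j}$ to within the reduced radius $r-\rho$ forces $U_{\text{T}}\subseteq\mathcal{S}_{N}(t)$ by the triangle inequality, giving $\sigma_{N}\le\max_{j}T_{N}^{(j)}$ with $T_{N}^{(j)}=\min_{n}\tau_{n}(B(z_{j},r-\rho))$ an extreme first passage time to a set of the admissible form in \eqref{finite}--\eqref{logsub}. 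Since the searchers are iid, each $T_{N}^{(\cdot)}$ is the minimum of $N$ iid copies of a first passage time obeying \eqref{logsub}, so the extreme first passage machinery used for Theorem~\ref{main} applies without modification.

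The only quantity that differs from the diffusive case is the concentration scale of these extreme first passage times. Writing $\P(\tau(U)\le t)=\exp(-\phi(t))$, assumption~\eqref{logsub} gives the stretched-exponential short-time rate $\phi(t)\sim c(U)\,t^{-p}$ as $t\to0^{+}$, with exponent $p=\alpha/(2-\alpha)$ and prefactor $c(U)=(2-\alpha)\,\alpha^{p}\big(\L^{2}(U_{0},U)/(4{D})\big)^{1/(2-\alpha)}$. The fastest of $N$ iid copies concentrates on the deterministic scale $t_{N}$ for which $N\,\P(\tau(U)\le t_{N})$ is of order one, i.e.\ $\phi(t_{N})\sim\ln N$, so
\begin{align*}
t_{N}\sim\Big(\frac{c(U)}{\ln N}\Big)^{1/p}=\Big(\frac{c(U)}{\ln N}\Big)^{\frac{2-\alpha}{\alpha}}.
\end{align*}
Taking $\L(U_{0},U)=L$ and using $(2-\alpha)/\alpha=2/\alpha-1$, the constants collapse to $(2-\alpha)^{(2-\alpha)/\alpha}\,\alpha\,\big(L^{2}/(4{D})\big)^{1/\alpha}$, which is exactly the bracketed quantity in \eqref{mainformulasub}. (Setting $p=1$ recovers $t_{N}=L^{2}/(4D\ln N)$ of Theorem~\ref{main}, confirming that both theorems are the same statement for two values of $p$.)

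It remains to upgrade concentration to convergence of the $m$th moment. The lower bound $\E[(\sigma_{N})^{m}]\ge\E[(T_{N}^{\ast})^{m}]$ follows once $\E[(T_{N}^{\ast})^{m}]\sim t_{N}^{m}$, which is the extreme first passage theory of \cite{lawley2020uni} adapted to the left-tail rate \eqref{logsub}, with \eqref{finite} ensuring these moments are finite for large $N$. The matching upper bound uses the union (inclusion--exclusion) bound $\P(\sigma_{N}>t)\le\sum_{j}\P(T_{N}^{(j)}>t)$ inside $\E[(\sigma_{N})^{m}]=\int_{0}^{\infty}m\,t^{m-1}\P(\sigma_{N}>t)\,\dd t$: the finitely many terms each contribute $\sim t_{N}^{m}$ with the dominant one governed by the cover ball closest to the rate $L$, while \eqref{finite} controls the large-$t$ part of the integral uniformly in $N$ so that it is asymptotically negligible. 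Refining the cover ($\rho\to0$ after $N\to\infty$) sends the governing rate of the upper bound to $L$, closing the gap.

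The main obstacle is the same as in Theorem~\ref{main}, namely promoting distributional concentration to \emph{moment} convergence by ruling out any tail contribution to $\E[(\sigma_{N})^{m}]$ uniformly in $N$, which is precisely what forces \eqref{finite} together with a monotonicity-in-$N$ argument, since for subdiffusion the right tail of $\tau(U)$ is typically only power-law. The subdiffusive case is genuinely more delicate in two respects. First, the left tail in \eqref{logsub} is a stretched exponential with exponent $p=\alpha/(2-\alpha)\in(0,1)$ rather than the exponent $1$ of \eqref{log}, so the Gumbel-type concentration of $T_{N}$ about $t_{N}$ is weaker and one must re-verify that the relative fluctuations still vanish fast enough that $\E[(T_{N}/t_{N})^{m}]\to1$. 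Second, because the short-time rate $\L^{2}(U_{0},\cdot)/(4{D})$ is inhomogeneous over $U_{\text{T}}$, one must confirm that the single worst-case point $y^{\ast}$ dominates and that all nearer points are covered at a strictly faster scale, so that only the ball at distance $L$ survives in the leading-order asymptotics.
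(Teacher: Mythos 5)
Your overall plan mirrors the paper's: sandwich $\sigma_{N}$ between a worst-case extreme first passage time and a finite-cover upper bound, and rerun the proof of Theorem~\ref{main} with \eqref{log} replaced by \eqref{logsub}. Your rate arithmetic is also correct: solving $\phi(t_{N})\sim\ln N$ with $p=\alpha/(2-\alpha)$ gives $c(U)^{1/p}=\alpha(2-\alpha)^{(2-\alpha)/\alpha}\big(\L^{2}(U_{0},U)/(4D)\big)^{1/\alpha}$, which is exactly the constant in \eqref{mainformulasub}; the rigorous input that makes this licit is Theorem~6 of \cite{lawley2020sub} (which the paper invokes in place of Theorem~1 of \cite{lawley2020uni}), not an ad hoc ``adaptation'' of the diffusive result. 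The lower bound is fine as well, and your attainment-of-the-supremum step is avoidable: the paper takes the liminf bound for each fixed $y\in U_{\text{T}}$ and only then takes the supremum over $y$ of the resulting constants.

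The genuine gap is in your upper bound. The displayed inequality $\P(\sigma_{N}>t)\le\sum_{j}\P(T_{N}^{(j)}>t)$ is the union bound, not inclusion--exclusion, and it cannot close the sandwich. Integrating it yields $\limsup_{N}(\ln N)^{m(2/\alpha-1)}\E[(\sigma_{N})^{m}]\le\sum_{j=1}^{k}\ell_{j}^{m}$, where $\ell_{j}$ is the limiting constant attached to the ball $B(z_{j},r-\rho)$. Your assertion that ``the dominant one'' governs is false: all $k$ terms are of the \emph{same} order $(\ln N)^{-m(2/\alpha-1)}$ and differ only in their constant prefactors, so none is negligible, and $\sum_{j}\ell_{j}^{m}$ strictly exceeds the required $\max_{j}\ell_{j}^{m}$ whenever two or more of the $\ell_{j}$ are positive (the generic situation, e.g.\ the torus with $U_{\text{T}}=M$). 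Worse, your final refinement $\rho\to0$ forces the number of balls $k(\rho)\to\infty$, so the union-bound constant grows rather than converging to the $L$-constant: the refinement widens the gap instead of closing it. This is precisely why the paper uses the \emph{exact} inclusion--exclusion expansion $\P(\cup_{j}\{T_{N}^{(j)}>t\})=\sum_{j=1}^{k}(-1)^{j-1}\sum_{|I|=j}\P(\min_{i\in I}T_{N}^{(i)}>t)$, observes that each $\min_{i\in I}T_{N}^{(i)}$ is itself the fastest FPT of the $N$ searchers to the finite union $\cup_{i\in I}B(z_{i},r-\rho)$ --- an admissible set, which is exactly why \eqref{finite} and \eqref{logsub} are hypothesized for finite unions of balls --- so that Theorem~6 of \cite{lawley2020sub} gives each term's limit with constant $\min_{i\in I}\ell_{i}$, and then invokes the combinatorial identity of Lemma~\ref{comb}, $\sum_{j}(-1)^{j-1}\sum_{|I|=j}\min_{i\in I}l_{i}=\max_{i}l_{i}$, to collapse the alternating sum to the max; letting $\rho\to0$ then matches the lower bound. (An alternative repair of your route: prove $\E[(\max_{j}T_{N}^{(j)})^{m}](\ln N)^{m(2/\alpha-1)}\to\max_{j}\ell_{j}^{m}$ via convergence in probability plus uniform integrability, the latter because the family is dominated by the sum of the individually $L^{1}$-convergent terms; but that is not the argument you wrote.)
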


Observe that the vanishing coefficient of variation in \eqref{cv} is also implied by Theorem~\ref{mainsub}. Observe also that Theorem~\ref{mainsub} reduces to Theorem~\ref{main} if $\alpha=1$.

\subsection{Proofs}

\begin{proof}[Proof of Theorem~\ref{main}]

For each $x\in U_{\text{T}}$ and $r>0$, let $\tau_{n}(B(x,r))$ be the FPT of the $n$th searcher to the ball $B(x,r)$ defined in \eqref{ball},
\begin{align*}
\tau_{n}(B(x,r))
:=\inf\{t>0:X_{n}(t)\in B(x,r)\},\quad n\in\{1,\dots,N\}.
\end{align*}
Let $T_{N}(x,r)$ denote the corresponding fastest FPT,
\begin{align*}
T_{N}(x,r)
:=\min\{\tau_{1}(B(x,r)),\dots,\tau_{N}(B(x,r))\}.
\end{align*}
The survival probability of $\sigma_{N}$ can be written in terms of these fastest FPTs,
\begin{align}\label{key}
\P(\sigma_{N}>t)
=\P(\cup_{x\in U_{\text{T}}}\{T_{N}(x,r)>t\}).
\end{align}

The representation \eqref{key} immediately yields the lower bound,
\begin{align}\label{easybound}
\P(\sigma_{N}>t)
\ge\P(T_{N}(y,r)>t),\quad\text{for any }y\in U_{\text{T}}.
\end{align}
Since for any nonnegative random variable $T\ge0$ and any moment $m\ge1$ we have \cite{durrett2019}
\begin{align}\label{useful}
\E[T^{m}]
=\int_{0}^{\infty}\P(T>z^{1/m})\,\dd z,
\end{align}
replacing $t$ by $z^{1/m}$ and integrating the bound \eqref{easybound} over $z\ge0$ implies that 
\begin{align*}
\E[(\sigma_{N})^{m}]
\ge\E[(T_{N}(y,r))^{m}],\quad\text{for any }y\in U_{\text{T}}.
\end{align*}
Hence, the assumptions in \eqref{finite}-\eqref{log} and Theorem~1 in \cite{lawley2020uni} imply
\begin{align}\label{sim1}
\liminf_{N\to\infty}\E[(\sigma_{N})^{m}](\ln N)^{m}
\ge \Big(\frac{\L^{2}(U_{0},B(y,r))}{4D}\Big)^{m}
\ge0,\quad\text{for any }y\in U_{\text{T}}.
\end{align}
Taking the supremum over $y\in U_{\text{T}}$ yields the best lower bound,
\begin{align}\label{tim1}
\liminf_{N\to\infty}\E[(\sigma_{N})^{m}](\ln N)^{m}
\ge \Big(\sup_{y\in U_{\text{T}}}\frac{\L^{2}(U_{0},B(y,r))}{4D}\Big)^{m}
>0,
\end{align}
where the final inequality is strict due to \eqref{nontrivial0} and the bilipschitz equivalence of $\L$ and ${\L_{B}}$.

To get an upper bound on the large $N$ behavior of $\E[(\sigma_{N})^{m}]$, let $\eps\in(0,1)$ be arbitrary. Since the target $U_{\text{T}}$ is compact, we may choose a finite set of points $x_{1},x_{2},\dots,x_{k}\in U_{\text{T}}$ so that $\cup_{i=1}^{k}B(x_{k},\eps r)$ covers $U_{\text{T}}$,
\begin{align}\label{epscover}
U_{\text{T}}\subset\cup_{i=1}^{k}B(x_{i},\eps r).
\end{align}
We claim that for any $i\in\{1,\dots,k\}$,
\begin{align}\label{imm}
B(x_{i},\eps r)\subset B(y_{i},r)\quad\text{if }y_{i}\in B(x_{i},(1-\eps)r).
\end{align}
To see \eqref{imm}, assume $y_{i}\in B(x_{i},(1-\eps)r)$ and $z\in B(x_{i},\eps r)$ and observe that the triangle inequality implies
\begin{align*}
\L_{B}(z,y_{i})
\le \L_{B}(z,x_{i})+\L_{B}(x_{i},y_{i})
\le\eps r+(1-\eps)r=r,
\end{align*}
and thus $z\in B(y_{i},r)$, which proves \eqref{imm}. Hence, \eqref{key} implies
\begin{align}\label{ub}
\begin{split}
\P(\sigma_{N}>t)
&=\P(\cup_{x\in U_{\text{T}}}\{T_{N}(x,r)>t\})\\
&\le\P(\cup_{i=1}^{k}\{T_{N}(x_{i},(1-\eps)r)>t\}).
\end{split}
\end{align}
To see this, suppose that there is an $x\in U_{\text{T}}$ such that $T_{N}(x,r)>t$. By \eqref{epscover}, there exists an $x_{i}\in U_{\text{T}}$ such that ${\L_{B}}(x_{i},x)\le\eps r$ and thus $B(x_{i},(1-\eps)r)\subset B(x,r)$. Hence, $T_{N}(x_{i},(1-\eps)r)>t$ which proves \eqref{ub}.

To estimate the upper bound in \eqref{ub}, we use the inclusion-exclusion principle to obtain \cite{durrett2019}
\begin{align*}
&\P(\cup_{i=1}^{k}\{T_{N}(x_{i},(1-\eps)r)>t\})\\
&\quad=\sum_{j=1}^{k} \Big((-1)^{j-1}\sum_{I\subseteq\{1,\ldots,k\}\atop |I|=j} \mathbb{P}(\cap_{i\in I}\{T_{N}(x_{i},(1-\eps)r)>t\})\Big)\\
&\quad=\sum_{j=1}^{k} \Big((-1)^{j-1}\sum_{I\subseteq\{1,\ldots,k\}\atop |I|=j} \mathbb{P}(\min_{i\in I}\{T_{N}(x_{i},(1-\eps)r)\}>t)\Big),
\end{align*}
where inner sum runs over all subsets $I$ of the indices $1,\dots,k$ which contain exactly $j$ elements. Replacing $t$ by $z^{1/m}$ and integrating over $z\ge0$ yields
\begin{align*}
&\sum_{j=1}^{k} \Big((-1)^{j-1}\sum_{I\subseteq\{1,\ldots,k\}\atop |I|=j} \int_{0}^{\infty}\mathbb{P}(\min_{i\in I}\{T_{N}(x_{i},(1-\eps)r)\}>z^{1/m})\,\dd z\Big)\\
&=\sum_{j=1}^{k} \Big((-1)^{j-1}\sum_{I\subseteq\{1,\ldots,k\}\atop |I|=j} \E\Big[\big(\min_{i\in I}\{T_{N}(x_{i},(1-\eps)r)\}\big)^{m}\Big]\Big).
\end{align*}
Further, the assumptions in \eqref{finite}-\eqref{log} and Theorem~1 in \cite{lawley2020uni} imply
\begin{align}\label{sim2}
\begin{split}
&\lim_{N\to\infty}(\ln N)^{m}\E\Big[\big(\min_{i\in I}\{T_{N}(x_{i},(1-\eps)r)\}\big)^{m}\Big]\\
&\quad=\Big(\frac{\min_{i\in I}\L^{2}(U_{0},B(x_{i},(1-\eps)r)}{4D}\Big)^{m}\ge0,
\end{split}
\end{align}

Therefore, \eqref{ub} implies
\begin{align}
\begin{split}\label{tim2}
&\limsup_{N\to\infty} (\ln N)^{m}\E[(\sigma_{N})^{m}]\\
&\quad\le\sum_{j=1}^{k}\Big((-1)^{j-1}\sum_{I\subseteq\{1,\ldots,k\}\atop |I|=j}\Big(\frac{\min_{i\in I}\L^{2}(U_{0},B(x_{i},(1-\eps)r)}{4D}\Big)^{m}\Big)\\
&\quad=\max_{i\in\{1,\dots,k\}}\Big(\frac{\L^{2}(U_{0},B(x_{i},(1-\eps)r)}{4D}\Big)^{m},
\end{split}
\end{align}
where the equality is due to Lemma~\ref{comb}. below. It follows from \eqref{epscover} that for any $\eta>0$, we may choose $\eps$ sufficiently small so that 
\begin{align*}
\max_{i\in\{1,\dots,k\}}\Big(\frac{\L^{2}(U_{0},B(x_{i},(1-\eps)r)}{4D}\Big)^{m}
\le\Big(\sup_{y\in U_{\text{T}}}\frac{\L^{2}(U_{0},B(y,r))}{4D}\Big)^{m}+\eta.
\end{align*}
Since $\eta>0$ is arbitrary, the proof is complete.
\end{proof}


\begin{proof}[Proof of Theorem~\ref{mainsub}]
The proof of Theorem~\ref{mainsub} follows identical steps as the proof of Theorem~\ref{main}, except for a few differences. The first difference is that, due to an application of Theorem~6 in \cite{lawley2020sub}, the inequality \eqref{sim1} is replaced by the following inequality for any $y\in U_{\text{T}}$,
\begin{align*}
\liminf_{N\to\infty}\E[(\sigma_{N})^{m}](\ln N)^{m(2/\alpha-1)}
\ge \Big(\theta\Big[\L^{2}(U_{0},B(y,r))\Big]^{\frac{1}{2-\alpha}}\Big)^{m(2/\alpha-1)}
\ge0,
\end{align*}
where $\theta=(2-\alpha)\alpha^{\frac{\alpha}{2-\alpha}}(4{{D}})^{-1/(2-\alpha)}$. Taking the supremum over $y\in U_{\text{T}}$ then yields the following lower bound in place of \eqref{tim1},
\begin{align*}
\liminf_{N\to\infty}\E[(\sigma_{N})^{m}](\ln N)^{m(2/\alpha-1)}
&\ge \Big(\theta\sup_{y\in U_{\text{T}}}\Big[\L^{2}(U_{0},B(y,r))\Big]^{\frac{1}{2-\alpha}}\Big)^{m(2/\alpha-1)}\\
&= \Big(\theta^{\frac{2-\alpha}{\alpha}}\sup_{y\in U_{\text{T}}}\Big[\L^{2}(U_{0},B(y,r))\Big]^{1/\alpha}\Big)^{m}
>0.
\end{align*}

The next difference is that, due to an application of Theorem~6 in \cite{lawley2020sub}, equation~\eqref{sim2} is replaced by
\begin{align*}
&\lim_{N\to\infty}(\ln N)^{m(2/\alpha-1)}\E\Big[\big(\min_{i\in I}\{T_{N}(x_{i},(1-\eps)r)\}\big)^{m}\Big]\\
&\quad=\Big(\theta^{\frac{2-\alpha}{\alpha}}\Big[\min_{i\in I}\L^{2}(U_{0},B(x_{i},(1-\eps)r)\Big]^{1/\alpha}\Big)^{m}\ge0,
\end{align*}
and thus \eqref{tim2} is replaced by
\begin{align*}
&\limsup_{N\to\infty} (\ln N)^{m(2/\alpha-1)}\E[(\sigma_{N})^{m}]\\
&\quad\le\sum_{j=1}^{k}\Big((-1)^{j-1}\sum_{I\subseteq\{1,\ldots,k\}\atop |I|=j}\Big(\theta^{\frac{2-\alpha}{\alpha}}\Big[\min_{i\in I}\L^{2}(U_{0},B(x_{i},(1-\eps)r)\Big]^{1/\alpha}\Big)^{m}\Big)\\
&\quad=\max_{i\in\{1,\dots,k\}}\Big(\theta^{\frac{2-\alpha}{\alpha}}\Big[\L^{2}(U_{0},B(x_{i},(1-\eps)r)\Big]^{1/\alpha}\Big)^{m},
\end{align*}
where the equality is due to Lemma~\ref{comb} below. It follows from \eqref{epscover} that for any $\eta>0$, we may choose $\eps$ sufficiently small so that 
\begin{align*}
&\max_{i\in\{1,\dots,k\}}\Big(\theta^{\frac{2-\alpha}{\alpha}}\Big[\L^{2}(U_{0},B(x_{i},(1-\eps)r)\Big]^{1/\alpha}\Big)^{m}\\
&\quad\le\Big(\theta^{\frac{2-\alpha}{\alpha}}\sup_{y\in U_{\text{T}}}\Big[\L^{2}(U_{0},B(y,r))\Big]^{1/\alpha}\Big)^{m}+\eta.
\end{align*}
Since $\eta>0$ is arbitrary, the proof is complete.
\end{proof}

\begin{lemma}\label{comb}
If $\{l_{i}\}_{i=1}^{k}$ is any set of $k\ge1$ real numbers, then
\begin{align}\label{eq:comb0}
\sum_{j=1}^{k} \Big((-1)^{j-1}\sum_{I\subseteq\{1,\ldots,k\}\atop |I|=j} \min_{i\in I}l_{i}\Big)
=\max_{i\in\{1,\dots,k\}}l_{i}.
\end{align}
\end{lemma}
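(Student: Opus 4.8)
The plan is to prove the identity by a direct combinatorial reindexing, exploiting the fact that, once the numbers are sorted, the minimum over any subset is determined entirely by the subset's smallest-ranked element. First I would relabel the indices so that $l_{1}\le l_{2}\le\cdots\le l_{k}$; this is harmless because both sides of \eqref{eq:comb0} are symmetric functions of the multiset $\{l_{i}\}$. Under this ordering $\max_{i}l_{i}=l_{k}$, and for every nonempty $I\subseteq\{1,\dots,k\}$ one has $\min_{i\in I}l_{i}=l_{\min I}$, since every $i\in I$ satisfies $i\ge\min I$ and hence $l_{i}\ge l_{\min I}$. This last identity survives ties, so no genericity assumption on the $l_{i}$ is needed.

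Next I would regroup the double sum on the left of \eqref{eq:comb0} according to the smallest element $p=\min I$ of each subset. For fixed $p$ and fixed cardinality $j$, a subset $I$ with $\min I=p$ and $|I|=j$ is obtained by adjoining to $p$ exactly $j-1$ of the $k-p$ indices lying above $p$, so there are $\binom{k-p}{j-1}$ such subsets, each contributing the same value $l_{p}$. Interchanging the order of summation then gives
\begin{align*}
\sum_{j=1}^{k}(-1)^{j-1}\sum_{I\subseteq\{1,\ldots,k\}\atop |I|=j}\min_{i\in I}l_{i}
=\sum_{p=1}^{k} l_{p}\sum_{j=1}^{k}(-1)^{j-1}\binom{k-p}{j-1}.
\end{align*}

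The key step is to evaluate the inner sum. Substituting $s=j-1$ and noting $\binom{k-p}{s}=0$ for $s>k-p$, it collapses to a signed binomial sum,
\begin{align*}
\sum_{j=1}^{k}(-1)^{j-1}\binom{k-p}{j-1}
=\sum_{s=0}^{k-p}(-1)^{s}\binom{k-p}{s}
=(1-1)^{k-p},
\end{align*}
which equals $0$ whenever $p<k$ and equals $1$ (with the convention $0^{0}=1$) when $p=k$. Only the $p=k$ term survives, leaving $l_{k}=\max_{i}l_{i}$, as claimed.

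I anticipate no genuine obstacle here: the entire content is the reindexing by the minimizing index, after which the alternating binomial sum annihilates every contribution except that of the largest element, and the most one must be careful about is the bookkeeping of subsets grouped by their minimum. As an alternative I would note a ``layer-cake'' proof: after shifting all $l_{i}$ by a constant $c$ to make them nonnegative (the shift is consistent because $\sum_{j=1}^{k}(-1)^{j-1}\binom{k}{j}=1$ matches the shift of the right-hand side), one writes $\min_{i\in I}l_{i}=\int_{0}^{\infty}\prod_{i\in I}\mathbf{1}\{l_{i}>t\}\,\dd t$ and applies the pointwise inclusion-exclusion identity $\sum_{\emptyset\neq I}(-1)^{|I|-1}\prod_{i\in I}x_{i}=1-\prod_{i}(1-x_{i})$ with $x_{i}=\mathbf{1}\{l_{i}>t\}$, integrating $\mathbf{1}\{\max_{i}l_{i}>t\}$ to recover $\max_{i}l_{i}$. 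The sorting argument above, however, is more elementary and sidesteps any positivity reduction, so that is the route I would write up.
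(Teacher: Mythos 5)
Your proof is correct and follows essentially the same route as the paper's: sort the $l_i$, group subsets by their smallest index $p$ (counting $\binom{k-p}{j-1}$ subsets of size $j$), swap the order of summation, and kill every term except $p=k$ via the alternating binomial sum. The only difference is cosmetic—you evaluate $\sum_{s=0}^{k-p}(-1)^s\binom{k-p}{s}=(1-1)^{k-p}$ directly from the binomial theorem, whereas the paper cites a reference for this standard identity.
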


\begin{proof}[Proof of Lemma~\ref{comb}]
Without loss of generality, assume $l_{1}\le l_{2}\le\dots\le l_{k}$. For any $j\in\{1,\dots,k\}$, we thus have
\begin{align}\label{eq:comb}
\sum_{I\subseteq\{1,\ldots,k\}\atop |I|=j}\min_{i\in I}l_{i}
=\sum_{i=1}^{k-j+1}{{k-i}\choose j-1}l_{i}.
\end{align}
To derive \eqref{eq:comb}, first observe that the smallest element of a subset of $\{l_1,\dots,l_k\}$ of size $j\in\{1,\dots,k\}$ can be $l_i$ for any $i\in\{1,\dots,k-j+1\}$. Further, there are ${k-i\choose j-1}$ such subsets since having chosen index $i\in\{1,\dots,k-j+1\}$, one must choose the other $j-1$ elements from the $k-i$ elements which are not smaller than $l_i$.

Hence, upon changing the order of summation in \eqref{eq:comb0}, it remains to show
\begin{align*}
\sum_{i=1}^{k}l_{i}\sum_{j=1}^{k-i+1}(-1)^{j-1}{k-i\choose j-1}=l_{k}.
\end{align*}
Since $l_{1}\le l_{2}\le\dots\le l_{k}$ are arbitrary, it thus remains to show
\begin{align*}
\sum_{j=1}^{k-i+1}(-1)^{j-1}{k-i\choose j-1}
=\begin{cases}
1 & \text{if }i=k,\\
0 & \text{if }i\in\{1,\dots,k-1\}.
\end{cases}
\end{align*}
The case $i=k$ is immediate. The case $i\in\{1,\dots,k-1\}$ is a special case of Corollary 2 in \cite{ruiz1996}.
\end{proof}

\section{Examples}\label{examples}

We now illustrate Theorems~\ref{main} and \ref{mainsub} in several examples.

\subsection{Pure diffusion on a $d$-dimensional torus}\label{extorus}

Suppose the searchers are pure diffusion processes with diffusivity $D>0$ on the $d$-dimensional torus with diameter $l=\sup_{x,y\in M}\|x-y\|>0$. That is, the searchers diffuse on
\begin{align}\label{torusexample}
M=\big[0,{2l/\sqrt{d}}\big)^{d}\subset\R^{d}
\end{align}
with periodic boundary conditions and suppose the searchers all start at the ``center'' of the torus,
\begin{align}\label{center}
U_{0}=x_{0}={(l/\sqrt{d},\dots,l/\sqrt{d})}\in M\subset\R^{d}.
\end{align}
Suppose the metric $\L_{B}$ in \eqref{ball} is the standard Euclidean metric and the target is the entire torus, $U_{\text{T}}=M$. To avoid trivial cases, assume the searcher detection radius does not cover the entire torus (i.e. $r<l$ so that \eqref{nontrivial0} is satisfied).

\begin{figure}[t]
  \centering
             \includegraphics[width=1\textwidth]{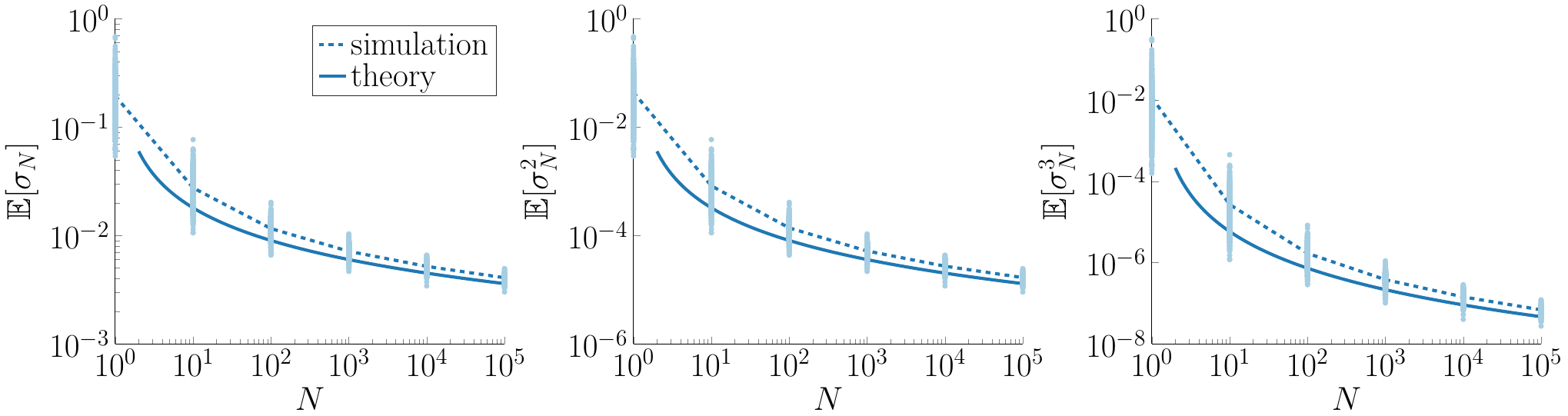}             
    \caption{Comparison of stochastic simulations (dashed curve) on the $(d=2)$-dimensional torus and the moment formula (solid curve) in \eqref{ex1formula} for the $m=1$ moment (left panel), $m=2$ moment (middle panel) and $m=3$ moment (right panel). The circle markers at $N=10^{0},10^{1},\dots,10^{5}$ are scatter plots of individual stochastic realizations of the diffusive cover time $\sigma_{N}$. The concentration of these scatter plots at the mean for large $N$ illustrates that the cover time becomes deterministic as $N\to\infty$ (see \eqref{cv}). The target is the entire torus with diameter $l=1/\sqrt{2}$ (i.e.\ the torus is a square with side length 1 and periodic boundary conditions), the detection radius is $r=0.3$, and the diffusion coefficient is $D=1$.}
 \label{figsim2d}
\end{figure}

Now, it is immediate that \eqref{finite} holds with $N=1$. Further, Theorem~1.2 in \cite{norris1997} implies that \eqref{log} holds with $\L$ given by the standard Euclidean metric,
\begin{align*}
\lim_{t\to0+}t\ln\P(\tau(U)\le t)
=-\frac{\sup_{x\in U}\|x_{0}-x\|^{2}}{4D}\le0.
\end{align*}
Hence, the moment formula \eqref{mainformula} in Theorem~\ref{main} holds with $\L$ given by the standard Euclidean metric. In particular, 
\begin{align}\label{ex1formula}
\E[(\sigma_{N})^{m}]
\sim
\Big(\frac{(l-r)^{2}}{4D\ln N}\Big)^{m}
\quad\text{as }N\to\infty,
\end{align}
since
\begin{align}\label{ex1L}
\sup_{y\in U_{\text{T}}}\L(x_{0},B(y,r))=l-r>0.
\end{align}
As noted in the Introduction, \eqref{ex1formula} is counterintuitive in that it (i) is independent of the space dimension $d\ge1$, (ii) depends only weakly on the detection radius $r>0$, (iii) depends on the geometry of the target set $U_{\text{T}}$ only through \eqref{ex1L}. In particular, \eqref{ex1formula} is unchanged if the target set is either the entire torus, $U_{\text{T}}=M$, or a single point at the origin,
\begin{align*}
U_{\text{T}}
=(0,\dots,0)\in M\subset\R^{d}.
\end{align*}

\subsection{Stochastics simulations on the torus in dimensions $d=1$ and $d=2$}\label{exsim}

In Figure~\ref{figsim2d}, we compare the moments of $\sigma_{N}$ computed from stochastic simulations with the asymptotic result in \eqref{ex1formula} from Theorem~\ref{main} for diffusion on a $(d=2)$-dimensional torus. The stochastic simulation algorithm is detailed in the Appendix. This plot illustrates excellent agreement between the simulations and the asymptotic theory. This plot also illustrates how the cover time becomes deterministic for many searchers, as the blue circle markers (which show individual stochastic realizations of $\sigma_{N}$) concentrate at their mean for large $N$ (see also \eqref{cv}).

\begin{figure}[t]
  \centering
             \includegraphics[width=.49\textwidth]{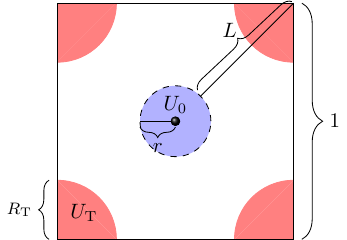}  
             \includegraphics[width=.49\textwidth]{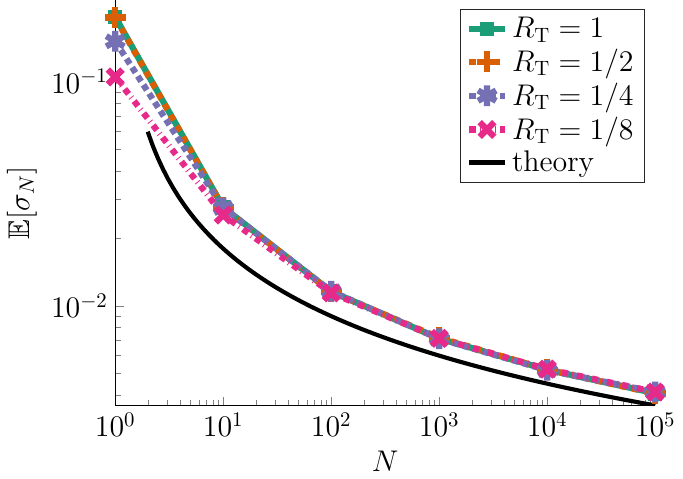}
    \caption{The left panel is a diagram showing that the searchers start in the ``center'' of the $(d=2)$-dimensional torus and the target is a disk of radius $R_{\text{T}}>0$ occupying the ``corners'' of the torus. The right panel compares stochastic simulations of the mean diffusive cover time for different choices of the target radius $R_{\text{T}}$. In agreement with theory in \eqref{ex1formula}, the cover time is independent of the target radius for large $N$.}
 \label{figsimr}
\end{figure}

In Figure~\ref{figsimr}, we illustrate how the cover time $\sigma_{N}$ is independent of the size of the target for large $N$ (assuming a fixed $L=\sup_{y\in U_{\text{T}}}\L(U_{0},B(y,r))$). Specifically, we plot the moments of $\sigma_{N}$ for the $(d=2)$-dimensional torus in \eqref{torusexample} where the searchers start at the ``center'' in \eqref{center} and the target is the disk centered $(0,0)\in M\subset\R^{2}$ with radius $R_{\text{T}}>0$ (see Figure~\ref{figsimr} for an illustration). For the largest value of $R_{\text{T}}$ in this plot, the target is the entire domain, and for the smallest value of $R_{\text{T}}$ in this plot, the target occupies less than $5\%$ of the area of the domain. Nevertheless, the cover times for these vastly different target areas are nearly indistinguishable once the number of searcher exceeds about $N=10$. This point is further illustrated in Figure~\ref{figsimr2}, where we plot the moments of $\sigma_N$ in the case that the target consists of several disks of common radius $R_{\text{T}}>0$ placed irregularly on the torus.

\begin{figure}[t]
  \centering
             \includegraphics[width=.49\textwidth]{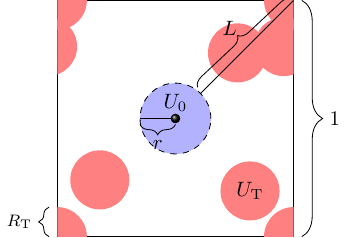}  
             \includegraphics[width=.49\textwidth]{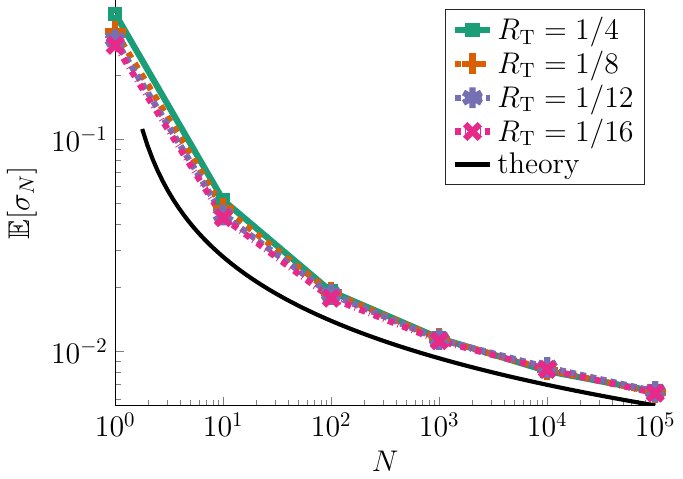}
    \caption{The left panel is a diagram showing that the searchers start in the ``center'' of the $(d=2)$-dimensional torus and the target is several irregularly placed disks of radius $R_{\text{T}}>0$. The right panel compares stochastic simulations of the mean diffusive cover time for different choices of $R_{\text{T}}$. In agreement with theory in \eqref{ex1formula}, the cover time is independent of $R_{\text{T}}$ for large $N$ since $L$ is independent of $R_{\text{T}}$.}
 \label{figsimr2}
\end{figure}

In Figure~\ref{figsim2d1d}, we illustrate that the cover time $\sigma_{N}$ is independent of the space dimension $d\ge1$ for large $N$. Specifically, Figure~\ref{figsim2d1d} plots the ratio of the mean cover times for a torus in dimensions $d=1$ and $d=2$ (i.e.\ $\E[\sigma_{N}^{(d=1)}]/\E[\sigma_{N}^{(d=2)}]$). As implied by \eqref{ex1formula} from Theorem~\ref{main}, this ratio approaches unity as $N$ grows.

\subsection{Subdiffusion}\label{exsub}

We now illustrate how Theorem~\ref{mainsub} can be applied to subdiffusive motion. A common model of subdiffusion is a time fractional diffusion equation or Fokker-Planck equation of the following form \cite{metzler1999},
\begin{align}\label{ffpe}
\frac{\partial}{\partial t}q
=\D {{D}}\Delta q,\quad x\in\R^{d},
\end{align}
where $q=q(x,t)$ denotes the probability density of the subdiffusive searcher,
\begin{align*}
q(x,t)\,\dd x
=\P(Y(t)=\dd x),
\end{align*}
$\D$ denotes the Riemann-Liouville time-fractional derivative \cite{samko1993} defined by
\begin{align*}
\D q(x,t)
=\frac{\partial}{\partial t}\int_{0}^{t}\frac{1}{\Gamma(\alpha)(t-t')^{1-\alpha}}q(x,t')\,\dd t',\quad \alpha\in(0,1),
\end{align*}
a ${{D}}>0$ is the subdiffusion coefficient (with dimension $(\text{length})^{2}(\text{time})^{-\alpha}$). Note that $\D$ is often denoted by $\DD$ \cite{metzler1999}. The stochastic path of such subdiffusive searchers whose probability densities satisfy \eqref{ffpe} can be obtained via a random time change or ``subordination'' of a normal diffusive searcher \cite{magdziarz2016}. More precisely, suppose $X=\{X(s)\}_{s\ge0}$ denotes the path of a normal diffusive searcher,
\begin{align}\label{XK}
\dd X(s)
=\sqrt{2{{D}}}\,\dd W(s),
\end{align}
where $W=\{W(s)\}_{s\ge0}$ is a standard Brownian motion (note that $X$ and $W$ indexed by $s\ge0$ which has dimension $[s]=(\text{time})^{-\alpha}$). The path of the subdiffusive searcher can then be obtained as
\begin{align}\label{timechange}
Y(t)
=X(S(t)),\quad t\ge0,
\end{align}
where $\{S(t)\}_{t\ge0}$ is an inverse $\alpha$-stable subordinator that is independent of $X$ \cite{magdziarz2016}.

\begin{figure}[t]
  \centering
             \includegraphics[width=.49\textwidth]{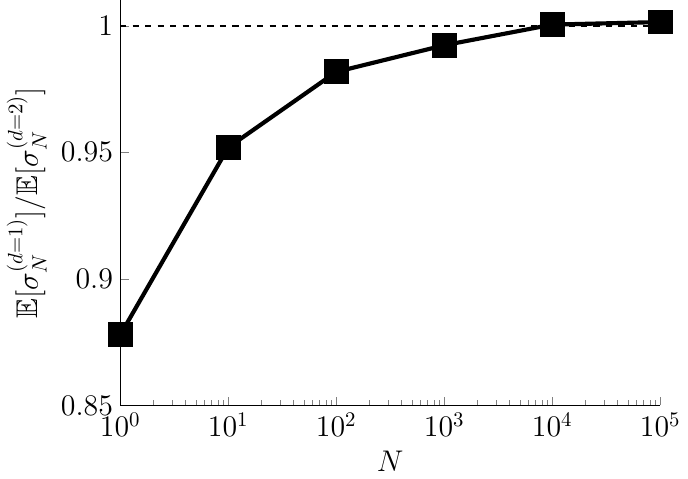}             
             \includegraphics[width=.49\textwidth]{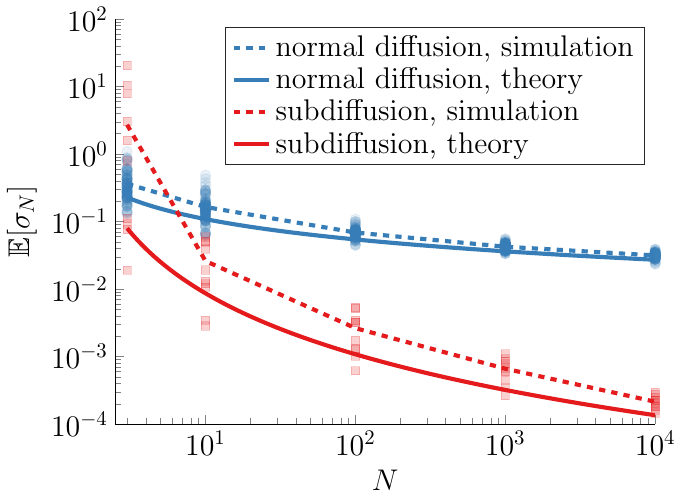}                          
             
    \caption{The left panel plots results from stochastic simulations of the diffusive cover time of a $(d=1)$-dimensional torus and a $(d=2)$-dimensional torus. In agreement with the theory in \eqref{ex1formula}, the cover time is independent of the space dimension $d$ for large $N$. The right panel compares the cover time of the $(d=1)$-dimensional torus for normal diffusion (blue) versus subdiffusion (red). In agreement with the theory, cover times are faster for subdiffusion than for normal diffusion for large $N$.  The circle and square markers are scatter plots of individual stochastic realizations of the cover time $\sigma_{N}$. In this plot, we take $L=D=1$.
    }
 \label{figsim2d1d}
\end{figure}

For our purposes, the important point is that the short-time distribution of FPTs of $Y$ can be obtained via the short-time distribution of $X$ using the relation \eqref{timechange}. In particular, Corollary~3 in \cite{lawley2020sub} implies that if FPTs of any process $X$ satisfy \eqref{log}, then FPTs of any process $Y$ defined via the time change of $X$ in \eqref{timechange} must satisfy \eqref{logsub}. Therefore, any application of Theorem~\ref{main} to study the cover times of a diffusive process $X$ can be generalized to an application of Theorem~\ref{mainsub} to a subdiffusive process $Y$ defined as a time change of $X$ in \eqref{timechange}, assuming merely that \eqref{finite} is satisfied for FPTs of the subdiffusive search process. We note that Theorem~8 in \cite{lawley2020sub} ensures that \eqref{finite} is satisfied as long as FPTs of the normal diffusive search process $X$ have survival probabilities that decay no slower than algebraically at large time.

In Figure~\ref{figsim2d1d}, we plot that moments of the cover time $\sigma_{N}$ for a subdiffusive search on the $(d=1)$-dimensional torus as in sections~\ref{extorus}-\ref{exsim} above (i.e.\ the domain is an interval with periodic boundary conditions). The paths of the subdiffusive searchers $\{Y_{n}(t)\}_{n=1}^{N}$ are iid realizations of $Y$ defined in \eqref{timechange} where $X$ satisfies \eqref{XK}. The stochastic simulation algorithm is detailed in the Appendix. In this plot, we take the subdiffusive exponent $\alpha=1/2$ and the target is the entire domain. As a technical point, we note that \eqref{finite} must hold for subdiffusion in this example since survival probabilities of FPTs of normal diffusion decay exponentially at large time (due to the finite domain), and thus Theorem~8 in \cite{lawley2020sub} implies that \eqref{finite} holds for subdiffusion for all $N>1/\alpha$. 

Figure~\ref{figsim2d1d} illustrates excellent agreement between the simulations and the asymptotic theory. Furthermore, Figure~\ref{figsim2d1d} illustrates that subdiffusive cover times are actually faster than normal diffusive cover times for many searchers (assuming subdiffusion is modeled by fractional Fokker-Planck equation). Indeed, Theorem~\ref{main} implies that diffusive cover times decay as $1/\log N$ whereas Theorem~\ref{mainsub} implies that subdiffusive cover times decay as $1/\log N^{2/\alpha-1}$ where $2/\alpha-1>1$ since $\alpha\in(0,1)$. This is shown in Figure~\ref{figsim2d1d} where the blue curves and markers show cover times for normal diffusion and the red curves and markers show cover times for subdiffusion.

To understand why cover times for many subdiffusive searchers are faster than cover times for many diffusive searchers, note that (i) both depend on rare events in which searchers move very quickly away from their initial position and (ii) subdiffusive searchers (modeled by a time-fractional Fokker-Planck equation) are more likely to move a long distance quickly than their normal diffusive counterpart. Point (ii) is most easily illustrated by comparing the normal diffusive and subdiffusive propagators in free space. Specifically, the probability density that a one-dimensional diffusive particle starting from the origin is at position $x$ after time $t>0$ is
\begin{align*}
    p(x,t)
    =\frac{1}{\sqrt{4\pi Dt}}e^{-x^2/(4Dt)},
\end{align*}
whereas this density for a subdiffusive particle decays as
\begin{align*}
    p_\alpha(x,t)
    \approx e^{-(t_{\alpha}/t)^{\alpha/(2-\alpha)}}\quad\text{for }|x|\gg\sqrt{Dt^\alpha},
\end{align*}
where we have defined the timescale $t_{\alpha}=\big(\alpha^{\alpha}(2-\alpha)^{2-\alpha}\frac{|x|^{2}}{4D}\big)^{1/\alpha}>0$ (the approximate equality omits the subdominant prefactors, see equation~(45) in \cite{metzler2000} for the full expansion). Point (ii) follows from observing that $p_\alpha(x,t)\gg p(x,t)$ for small $t$ if $\alpha\in(0,1)$.

\subsection{Diffusion with space-dependent diffusivity and drift}\label{exdrift}

We now illustrate how Theorem~\ref{main} applies to more general drift-diffusion processes. Assume the searchers diffuse according to an It\^{o} stochastic differential equation (SDE) on $M=\R^{{d}}$,
\begin{align}\label{sde}
\begin{split}
\dd X_{n}
&={{{\mu}}}(X_{n})\,\dd t+\sqrt{2D}{\sigma}(X_{n})\,\dd W_{n},
\end{split}
\end{align}
where ${{{\mu}}}:\R^{{d}}\to\R^{{d}}$ is a possibly space-dependent drift, $D>0$ is a characteristic diffusion coefficient, ${\sigma}:\R^{{d}}\to\R^{{{d}}\times q}$ is a dimensionless, matrix-valued function that describes possible space-dependence or anisotropy in the diffusion, and $W_{n}(t)\in\R^{q}$ is a standard Brownian motion in $q$-dimensional space. As technicalities, we follow \cite{lawley2020uni} and assume ${{{\mu}}}$ is uniformly bounded and uniformly H\"{o}lder continuous and ${\sigma}{\sigma}^{\top}$ is uniformly H\"{o}lder continuous and its eigenvalues are in a finite interval bounded above zero. For a smooth path $\omega:[0,1]\to M$, define its length, $l(\omega)$, in the following Riemannian metric which depends on the diffusivity matrix $a:={\sigma}{\sigma}^{\top}$ in \eqref{sde},
\begin{align}\label{ll}
l(\omega)
:=\int_{0}^{1}\sqrt{\dot{\omega}^{\top}(s)a^{-1}(\omega(s))\dot{\omega}(s)}\,\dd s.
\end{align}
For any pair $x_{0},x\in\R^{d}$, define the geodesic to be the following infimum of over smooth paths $\omega:[0,1]\to M$ connecting $\omega(0)={x_{0}}$ to $\omega(1)={{x}}$:
\begin{align}\label{drie}
\begin{split}
\drie({x_{0}},{{x}})
&:=\inf\{l(\omega):\omega(0)=x_{0},\,\omega(1)=x\},\quad x_{0},x\in\R^{d}.
\end{split}
\end{align}

Under these assumption, Varadhan's formula \cite{varadhan1967} was used in \cite{lawley2020uni} to show that \eqref{log} holds with metric in $\L=\drie$ in \eqref{drie}. Further, assume that the drift $\mu$ and diffusivity $\sigma$ are such that \eqref{finite} holds. One simple way to ensure that \eqref{finite} holds for $N=1$ is to assume that at all sufficiently large radii, the drift $\mu$ is a constant force pointing toward the origin and $\sigma$ is the $d\times d$ identity matrix.

Hence, the moment formula \eqref{mainformula} in Theorem~\ref{main} holds with $\L=\drie$ for any compact target set $U_{\text{T}}\subset\R^{d}$. Notice that the many searcher cover time is independent of the drift in $\mu$ in the SDE \eqref{sde}. Notice also that the space-dependent diffusivity $\sigma$ affects the many searcher cover time through the geodesic $\drie$. Also, as in the simple example in sections~\ref{extorus}-\ref{exsim} above, the many searcher cover time depends only weakly on the detection radius $r>0$ and depends on the target only through the single length $L$ and is otherwise independent of the target size.

\subsection{Diffusion on a manifold}

To illustrate another scenario in which Theorem~\ref{main} applies, assume $M$ is a ${{d}}$-dimensional smooth Riemannian manifold. Assume the $N$ searchers are iid realizations of a searcher $\{X(t)\}_{t\ge0}$ which is a diffusion on $M$ which is described by its generator $\mathcal{A}$, which in each coordinate chart is a second order differential operator of the following form
\begin{align*}
\mathcal{A} f
=D\sum_{i,j=1}^{n}\frac{\partial}{\partial x_{i}}\bigg(a_{ij}(x)\frac{\partial f}{\partial x_{j}}\bigg),
\end{align*}
where $a=\{a_{ij}\}_{i,j=1}^{n}$ satisfies certain technical assumptions (in each coordinate chart, assume $a$ is continuous, symmetric, and that its eigenvalues are in a finite interval bounded above zero). Assume $X$ reflects from the boundary of $M$ if $M$ has a boundary and assume $M$ is connected and compact to ensure that \eqref{finite} is satisfied.

Theorem~1.2 in \cite{norris1997} implies that \eqref{log} holds with $\L=\drie$ in \eqref{drie}. Hence, the moment formula \eqref{mainformula} in Theorem~\ref{main} holds with $\L=\drie$ in \eqref{drie}.

\section{Discussion}

In this paper, we proved a simple formula for all the moments of cover times of many diffusive or subdiffusive searchers that applies in a great variety of mathematical models of stochastic search. This formula shows that the only relevant parameters in the many searcher limit is (i) the searcher's characteristic (sub)diffusivity and (ii) a certain geodesic distance between the searcher starting location(s) and the farthest point in the target. We illustrated these general mathematical results in several examples and using stochastic simulations. 

As detailed in the Introduction, there is a vast literature on cover times, and the majority of this prior work considers a single stochastic searcher in the parameter regime in which the cover time diverges. In a continuous space setting, this means the target to be covered is much larger than the detection radius of the searcher. For search on a discrete network, this means the target consists of a large number of discrete nodes \cite{chupeau2015}. A notable exception is the very interesting work of Majumdar, Sabhapandit, and Schehr \cite{majumdar2016} which studied the time for $N\ge1$ purely diffusive, one-dimensional searchers to cover a finite interval with either reflecting or periodic boundary conditions. In the many searcher limit $N\to\infty$, these authors obtained the moment formula in \eqref{main0}. We have thus shown that this simple one-dimensional result for pure diffusion (i.e.\ Brownian motion) extends to arbitrary, higher-dimensional spatial domains and much more general stochastic search dynamics.

It has been claimed that the cover time for $N\ge1$ searchers is a simple rescaling of the cover time for a single searcher \cite{chupeau2015, dong2023}. We have shown how this approximation breaks down as $N$ grows. Nevertheless, this investigation raises the following question: when is a particular system in the ``small detection radius $r$'' regime versus the ``many searcher $N$'' regime? That is, notice that for any fixed number of searchers $N$ (even large), the cover time $\sigma_{N}$ must diverge as the detection radius $r$ vanishes,
\begin{align}\label{rsmall}
\sigma_{N}\to\infty\quad\text{as }r\to0.
\end{align}
On the other hand, for any fixed detection radius $r>0$ (even small) we have shown that the cover time $\sigma_{N}$ must vanish as the number of searchers $N$ grows,
\begin{align}\label{Nbig}
\sigma_{N}\to0\quad\text{as }N\to\infty.
\end{align}
We conjecture that the regimes in \eqref{rsmall} and \eqref{Nbig} can be distinguished by comparing the mean cover time for a single searcher, $\E[\sigma_{1}]$ with the formula $\frac{L^{2}}{4D\ln N}$ as in \eqref{main0}. In particular, we conjecture that
\begin{align}\label{conjecture}
\E[\sigma_{N}]
\approx\max\Big\{\frac{\E[\sigma_{1}]}{N},\frac{L^{2}}{4D\ln N}\Big\},
\end{align}
so that ``small detection radius $r$'' regime of \eqref{rsmall} is guaranteed if $\frac{\E[\sigma_{1}]}{N}\gg\frac{L^{2}}{4D\ln N}$, whereas the ``many searcher $N$'' regime is guaranteed by $\frac{\E[\sigma_{1}]}{N}\ll\frac{L^{2}}{4D\ln N}$. Investigating the conjecture in \eqref{conjecture} is an interesting avenue for future work. We note that numerical investigation of \eqref{conjecture} is challenging due to the difficulty in sampling $\sigma_{N}$ for a small detection radius \cite{mendoncca2011, grassberger2017}.

An interesting avenue for future research is to investigate cover times of many searchers for other modes of stochastic search. For example, it is natural to ask about many random walkers on a discrete network \cite{lawley2020networks}, since many prior works on cover times investigate such discrete search scenarios \cite{nemirovsky1990, yokoi1990, brummelhuis1991, hemmer1998, nascimento2001, zlatanov2009, mendoncca2011, chupeau2014, chupeau2015, grassberger2017, cheng2018, maziya2020, dong2023, han2023, aldous1983, aldous1989, broder1989, aldous1989b, kahn1989, ding2012, belius2013}. In addition, given the results of the present paper, it is natural to ask about cover times of superdiffusive search. Such an investigation would depend on the model of superdiffusive search. One possible approach would be to consider cover times of L{\'e}vy flights, since the extreme first passage time distributions of such superdiffusive searchers has been worked out \cite{lawley2023super}.

\section{Appendix: Stochastic simulations algorithms}

In this Appendix, we describe the stochastic simulation algorithms used in section~\ref{examples}.

\subsection{Normal diffusion on $(d=2)$-dimensional torus}

We discretize the domain $M$ to a square lattice $\widetilde{M}$ with minimum distance $\Delta x$ and approximate the cover time over the lattice by
\begin{equation} \label{sigma_lattice}
	\sigma_N \approx \inf \{ t > 0 : \widetilde{\mathcal{S}}_N(t) \subseteq \widetilde{U}_T \},
\end{equation}
where $\widetilde{U}_T = U_T \cap \widetilde{M}$ and $\widetilde{\mathcal{S}}_N(t) = \mathcal{S}_N(t) \cap \widetilde{M}$. Using this approximation, we sample $\sigma_N$ by simulating the $N$ diffusive paths $\{X_n\}_{n=1}^N$ with unit diffusivity via a standard Euler-Maruyama scheme \cite{kloeden1992} with discrete time step $\Delta t$ until the condition \eqref{sigma_lattice} is satisfied. We choose $\Delta x \le r/10$ so that the lattice approximates the detection area well and $\Delta t \le \Delta x^2 /8$ so that most of the diffusion step sizes are under $\Delta x$.

\subsection{Normal diffusion on $(d=1)$-dimensional torus}\label{1dsim}

Let the spatial domain $M$ be the one-dimensional torus with diameter $l>0$ (i.e.\ the interval $(-l,l)$ with periodic boundary conditions) and suppose the searchers move by pure diffusion at all start at the origin. In this simple one-dimensional setting, the cover time can be written as
\begin{align}\label{sigma1d}
\sigma_{N}
=\inf\Big\{t>0:\max_{1\le n\le N,0\le s\le t}\{X_{n}(s)\}-\min_{1\le n\le N,0\le s\le t}\{X_{n}(s)\}>2(l-r)\Big\},
\end{align}
where $\{X_{n}\}_{n=	1}^{N}$ are $N\ge1$ iid diffusion processes on the entire real line. Using the representation \eqref{sigma1d}, we sample $\sigma_{N}$ by simulating the $N$ paths $\{X_{n}\}_{n=1}^{N}$ via a standard Euler-Maruyama scheme \cite{kloeden1992} with discrete time step $\Delta t=10^{-6}$ until the condition in \eqref{sigma1d} is satisfied.

\subsection{Subdiffusion on $(d=1)$-dimensional torus}

The subdiffusion simulations in section~\ref{exsub} follow along similar lines as the normal diffusion simulations described in section~\ref{1dsim}, except the subdiffusive paths $Y_{n}=Y_{n}(t)$ for $n=1,\dots,N$ are more complicated to simulate. Specifically, we first simulate an $\alpha$-stable subordinator $T=T(s)$ on a discrete time grid $\{s_{k}\}_{k}$ for $s_{k}=k\Delta s$ for some $\Delta s>0$ following the method of Magdziarz et al.\ \cite{magdziarz2007}. In particular, $T$ is exactly simulated on the discrete grid $\{s_{k}\}_{k}$ according to
\begin{align*}
T(s_{k+1})
=T(s_{k})+(\Delta s)^{1/\alpha}\Theta_{k},\quad k\ge0,
\end{align*}
where $T(s_{0})=T(0)=0$ and $\{\Theta_{k}\}_{k\in\mathbb{N}}$ is an independent and identically distributed sequence of realizations of 
\begin{align*}
\Theta
=\frac{\sin(\alpha(V+\pi/2)}{(\cos(V))^{1/\alpha}}\bigg(\frac{\cos(V-\alpha(V+\pi/2))}{E}\bigg)^{\frac{1-\alpha}{\alpha}},
\end{align*}
where $V$ is uniformly distributed on $(-\pi/2,\pi/2)$ and $E$ is an independent unit rate exponential random variable. Having sampled $\{T(s_{k})\}_{k}$, we approximate the inverse subordinator $S=\{S(t)\}_{t\ge0}$ defined by
\begin{align*}
S(t)
:=\inf\{s>0:T(s)>t\}
\end{align*}
on a discrete time grid $\{t_{m}\}_{m}$ with $t_{m}=m\Delta t$ for some $\Delta t>0$. In particular, we set $S(t_{m})=s_{k}$ where $k$ is the unique index such that $T(s_{k-1})<t_{m}\le T(s_{k})$. Finally, we obtain $Y$ on the discrete time grid $\{t_{m}\}_{m}$ via linear interpolation,
\begin{align*}
Y(t_{m})
=\Big(\frac{S(t_{m})-s_{k}}{s_{k+1}-s_{k}}\Big)X(s_{k+1})
+\Big(\frac{s_{k+1}-S(t_{m})}{s_{k+1}-s_{k}}\Big)X(s_{k}),\quad m\ge1,
\end{align*}
where $k$ is the largest index such that $s_{k}\le S(t_{m})\le s_{k+1}$ and $\{X(s_{k})\}_{k}$ is simulated via a standard Euler-Maruyama scheme \cite{kloeden1992}. We take $\Delta s=\Delta t=10^{-2}/(4\ln N)$ for the simulations in section~\ref{exsub}.


\bibliography{library.bib}
\bibliographystyle{siam}

\end{document}